\documentclass{article}

\usepackage{PRIMEarxiv}
\usepackage{amsmath,amssymb}

\usepackage{amsthm}

\usepackage[utf8]{inputenc} % allow utf-8 input
\usepackage[T1]{fontenc}    % use 8-bit T1 fonts
\usepackage{hyperref}       % hyperlinks
\usepackage{url}            % simple URL typesetting
\usepackage{booktabs}       % professional-quality tables
\usepackage{amsfonts}       % blackboard math symbols
\usepackage{nicefrac}       % compact symbols for 1/2, etc.
\usepackage{microtype}      % microtypography
\usepackage{lipsum}
\usepackage{fancyhdr}       % header
\usepackage{graphicx}       % graphics
\graphicspath{{media/}}     % organize your images and other figures under media/ folder
\usepackage{float}

\newtheorem{definition}{Definition}
\newtheorem{theorem}{Theorem}

%\newtheorem{proof}{Proof}
%Header
\pagestyle{fancy}
\thispagestyle{empty}
\rhead{ \textit{ }} 

% Update your Headers here
\fancyhead[LO]{Running Title for Header}
% \fancyhead[RE]{Firstauthor and Secondauthor} % Firstauthor et al. if more than 2 - must use \documentclass[twoside]{article}

%% Title
\title{Practical Applications of Unidimensional Optimization Using Octave.
%%%% Cite as
%%%% Update your official citation here when published 
%\thanks{\textit{\underline{Citation}}: 
%\textbf{Authors. Title. Pages.... DOI:000000/11111.}} 
}

\author{
Erick Clapton de Lima Silva\\
\textit{Centro de Ensino Superior do Seridó  }\\
\textit{Federal University of Rio Grande do Norte- UFRN}\\
Caicó, Rio Grande do Norte \\
\texttt{erick.clapton.099@ufrn.edu.br} \\
  %% examples of more authors
   \And
  Francisco Marcio Barboza \\
  \textit{Department of Computing and Technology }\\
  \textit{Federal University of Rio Grande do Norte- UFRN}\\
  Caicó, Rio Grande do Norte \\
  \texttt{marcio.barboza@ufrn.br} 
}

\begin{document}
\maketitle

\begin{abstract}
This paper suggests integrating one-dimensional optimization methods to tackle diverse problems, emphasizing their significance in resolving practical issues and applying mathematical principles to real-world contexts. It focuses on employing the Octave programming language, backed by specific examples, to simplify the practical application of mathematical concepts and improve problem-solving abilities. The research aims to assess the effect on students' comprehension of one-dimensional optimization.
\end{abstract}

% keywords can be removed
\keywords{Unidimensional Optimization\and Octave \and Real-world Problems.}

\section{Introduction}
Unidimensional optimization plays a crucial role in various domains, offering an effective approach to enhancing processes and decision-making. This mathematical discipline focuses on either maximizing or minimizing a single objective function, proving indispensable for addressing a diverse array of practical problems \cite{nocedal2006numerical}.

An integral aspect of unidimensional optimization is its ability to provide optimal solutions in scenarios where complexity can be reduced to a single dimension. By locating the extremes of a function, unidimensional optimization enables us to identify values that either maximize benefits or minimize costs, thereby exerting a positive influence across various aspects of daily life \cite{luenberger1984linear}.

This article explores two specific applications of unidimensional optimization, demonstrating its practical significance: optimizing the distance from the movie screen to enhance viewers' visual experience and calculating the maximum size of an L-shaped pipe to fit through a corridor.

By understanding the significance of these applications, we can discern how unidimensional optimization not only enhances the efficiency and advancement of systems but also contributes significantly to public well-being and health. Throughout this article, we will meticulously examine each application, analyzing the distinct challenges encountered in each domain and showcasing how unidimensional optimization provides efficient solutions.

Optimization applications are predominantly pervasive in computer science and applied mathematics, where they play a crucial role in addressing various problems. However, optimization extends its reach across diverse fields of knowledge, including engineering, natural sciences, and economics \cite{nocedal2006numerical}. Despite the varied disciplines, they share a common objective: leveraging optimization for problem-solving in a more direct and precise manner.

The unidimensional optimization problem holds paramount importance in practical optimization \cite{strang1986introduction}. This significance arises not only from its frequent occurrence in research but also from the fact that more complex problems involving multiple variables can often be tackled through a sequence of one-dimensional problems. In this article, we present some fundamental numerical techniques for locating the minimum of a single-variable function. We focus solely on minimization problems, with the understanding that maximization concerns can be addressed similarly, as minimizing $f(x)$ is equivalent to maximizing $-f(x)$.

\section{Methodology}
 \begin{definition}
  (Function increasing and decreasing) \\
  Be \(f\) a function defined on an interval \(I\), and let \(x_1\) e \(x_2\) any two points in \(I\).
  \begin{enumerate}
    \item \(f\) is increasing on \(I\) if \(x_1 < x_2\) implies that \(f(x_1) < f(x_2)\).
    \item \(f\) is decreasing on \(I\) if \(x_1 < x_2\) implies that \(f(x_1) > f(x_2)\).
  \end{enumerate}
\end{definition}

\begin{theorem}
  Be \(f\) continuous on a closed interval \([a, b]\) and differentiable at \((a, b)\).
  \begin{enumerate}
    \item Se \(f(x) > 0\) for all \(x\) em \((a, b)\), then \(f\) is increasing on \([a, b]\).
    \item Se \(f(x) < 0\) for all \(x\) em \((a, b)\), then \(f\) is decreasing on \([a, b]\). 
  \end{enumerate}
\end{theorem}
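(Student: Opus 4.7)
The plan is to reduce the statement to the Mean Value Theorem applied on sub-intervals of $[a,b]$. The hypothesis is exactly tailored to this: continuity on the closed interval and differentiability on the open interval are the two ingredients the MVT requires, and the conclusion of MVT converts a sign hypothesis on $f'$ directly into a sign statement about the difference $f(x_2)-f(x_1)$, which is precisely the notion of monotonicity codified in the preceding definition.

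First I would fix two arbitrary points $x_1, x_2 \in [a,b]$ with $x_1 < x_2$ and observe that $[x_1,x_2] \subseteq [a,b]$, so $f$ is continuous on $[x_1,x_2]$, and $(x_1,x_2) \subseteq (a,b)$, so $f$ is differentiable on $(x_1,x_2)$. This justifies applying the Mean Value Theorem to $f$ on $[x_1,x_2]$, producing a point $c \in (x_1,x_2)$ such that
\[
f(x_2) - f(x_1) \;=\; f'(c)\,(x_2 - x_1).
\]
Since $x_2 - x_1 > 0$, the sign of the right-hand side is the sign of $f'(c)$.

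For part (1), the hypothesis $f'(x) > 0$ on $(a,b)$ gives in particular $f'(c) > 0$, so $f(x_2) - f(x_1) > 0$, i.e.\ $f(x_1) < f(x_2)$; since $x_1 < x_2$ were arbitrary in $[a,b]$, this is exactly the definition of $f$ being increasing on $[a,b]$. Part (2) follows by the same argument with the inequality reversed: $f'(c) < 0$ gives $f(x_2) < f(x_1)$, which is the definition of $f$ decreasing on $[a,b]$.

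There is no real obstacle here beyond citing the Mean Value Theorem as a background result; the only point to be careful about is matching the hypothesis of MVT to the hypothesis of this theorem, which is immediate. No delicate estimate, no limit argument, and no case analysis at the endpoints is needed, because MVT absorbs all of that work.
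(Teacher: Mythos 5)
Your proof is correct. Note that the paper itself does not supply an argument at all: its ``proof'' consists solely of a citation to a real analysis text, so there is nothing in the paper to compare against step by step. What you have written is the standard Mean Value Theorem argument that such a reference would contain: restrict to an arbitrary subinterval $[x_1,x_2]$, check that the continuity and differentiability hypotheses transfer, extract $c\in(x_1,x_2)$ with $f(x_2)-f(x_1)=f'(c)(x_2-x_1)$, and read off the sign. This is complete and matches the definition of increasing/decreasing given just before the theorem. One small point worth flagging: the theorem as printed in the paper says ``$f(x)>0$'' and ``$f(x)<0$'' where it clearly means $f'(x)>0$ and $f'(x)<0$ (otherwise the statement is false --- a positive function need not be increasing). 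You silently and correctly read the hypothesis as a sign condition on the derivative; it would be worth stating that correction explicitly so the reader knows which statement is actually being proved.
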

\begin{proof}
  See \cite{royden2010real}.
\end{proof}

\begin{definition}
(Local Extrema) \\
Be $c$ belonging to the domain of a function $f$ \\
\begin{enumerate}
    \item $f(c)$ is a local maximum of $f$ If there exists an open interval containing $(a, b)$ including $c$, such that $f(x) \leq f(c)$ for any $x$ em $(a, b)$ that belongs to the domain of $f$.
    \item $f(c)$ is a local minimum of $f$ if there exists an open interval  $(a, b)$  containing $c$, such that $f(x) \geq f(c)$ for any $x$ em $(a, b)$ belonging to the domain of $f$.
\end{enumerate}
\end{definition}

\begin{definition}
(Critical Values) \\
If a function \( f \) has an extreme value at a number \( c \) in an open interval, then either \( f'(c) = 0 \) or \( f'(c) \) does not exist. A number \( c \) with this property is called a critical point of \( f \).
\end{definition}

\begin{definition}
    (First Derivative Test) \\
Suppose $f$ is continuous on an open interval containing a critical value $c$.
\begin{enumerate}
    \item If $f$ changes sign from positive to negative at $c$, then $f$ has a local maximum value at $c$.
    \item If $f$ changes sign from negative to positive at $c$, then $f$ has a local minimum value at $c$.
\end{enumerate}
\end{definition}

\begin{definition}
    (Second Derivative Test) \\
Suppose $f$ is twice different from  $c$.
\begin{enumerate}
    \item Se $f(c) = 0$ e $f'(c) > 0$, then $f$ has a relative minimum in em $x = c$.
    \item Se $f(c) = 0$ e $f'(c) < 0$, then $f$ has a relative maximum in $x = c$.
    \item Se $f(c) = 0$ e $f'(c) = 0$, then the test is inconclusive.
\end{enumerate}
\end{definition}

In many applications, there arises a necessity to ascertain the local minimum or maximum of a function represented as $f(x)$. Definition 5 elucidates that the value of $x$ may correspond to a local minimum or maximum, a determination typically facilitated by finding the root of the function's derivative. The corresponding value of $y$ is then derived by substituting $x$ into the function. In Octave, when seeking the minimum value of a one-dimensional function within the interval $x_1 \leqslant x \leqslant x_2$, one can utilize the command \texttt{fminbnd}.

The \texttt{fminbnd} command in Octave serves to locate the minimum of a one-dimensional function within a predefined interval. Employing an optimization algorithm, it identifies the value of $x$ that minimizes the function $f(x)$ within the specified interval. The fundamental syntax of the command is as follows:

\[
\texttt{[x, fval] = \texttt{fminbnd} (f, x1, x2)}
\]

where:
\begin{itemize}
\item \texttt{f} is the function to be minimized.
\item \texttt{x1} and \texttt{x2} are the bounds of the interval over which the function is evaluated.
\item \texttt{x} is the value of $x$ that minimizes the function within the interval.
\item \texttt{fval} is the minimum value of the function found at \texttt{x}.
\end{itemize}

This command proves particularly handy when swiftly determining the minimum of a function within a designated interval, eliminating the necessity of computing derivatives or resorting to more intricate optimization techniques.

Moreover, the \texttt{fminbnd} command can be employed to pinpoint the maximum of a function. This involves multiplying the function by $-1$ and subsequently seeking the minimum \cite{gilat2004matlab}.

\subsection{Pipe Problem}

A pipe of length $L$ and negligible diameter must be horizontally transported around a corner from a corridor with a width of 3 feet to a corridor with a width of 6 feet (see Figure \ref{Tubulação}). What is the maximum length that the pipe can have? \

\begin{figure} [H]
    \centering
    \includegraphics{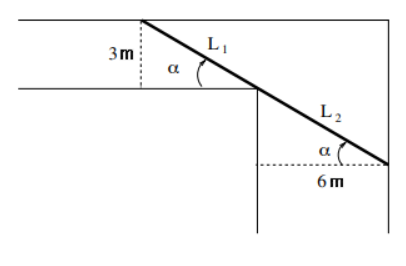}
    \caption{Pipe, Source: \cite{kharab2018introduction}}
    \label{Tubulação}   
\end{figure}
Therefore, $L$ depends on the angle $\alpha$. \

$L(\alpha) = L_1 + L_2 = \frac{3}{\sin(\alpha)} + \frac{6}{\cos(\alpha)}$ \

We conclude that the length of the longest pipeline is given by the absolute minimum of \( L(\alpha) \) in the interval \( (0, \pi/2) \). The graph of \( L(\alpha) \) shows that it has only one relative minimum; therefore, the absolute minimum of \( L(\alpha) \) in \( (0, \pi/2) \) occurs at this relative minimum. The approximate minimum of \( L(\alpha) \) can be obtained using one of the numerical methods described in this paper.

\subsection{Cinema Problem}
To achieve the best viewing experience while watching a movie, it is necessary for a person to position themselves at a distance $x$ from the screen such that the viewing angle $\theta$ is maximized. The problem at hand, taken from \cite{gilat2004matlab}, aims to determine the distance $x$ for which $\theta$ is maximum, as illustrated in the configuration in Figure \ref{Figura Cinema}.
\begin{figure}[H]
    \centering
    \includegraphics{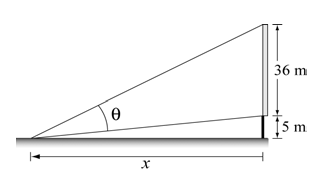}
    \caption{Geometric Modeling of the Problem. Source: \cite{gilat2004matlab}}
    \label{Figura Cinema}
\end{figure}
 \section{Results and discussions}

\subsection{Piping Problem Solution}
This question pertains to determining the maximum length of a pipe that can be horizontally transported around a corner within a corridor. The initial corridor is 3 feet wide, whereas the target corridor is 6 feet wide. To address this problem, numerical methods, such as optimization techniques utilizing the \texttt{fminbnd} command in Octave, can be employed. This method enables the identification of the minimum of a function within a predefined interval, which in this instance spans from $[0, 90]$ degrees. To elaborate on this question, we can reformulate the formula in several stages. \\ \\
$L(\alpha) = L_1 + L_2 = \frac{3}{\sin(\alpha)} + \frac{6}{\cos(\alpha)}$ \\ \\

$L(\alpha) = L_1 + L_2 = 3 \csc(\alpha) + 6 \sec(\alpha)$ \\ \\
This code defines a function  $f(x) = 3 csc(\alpha) + 6 \sec(\alpha)$, where it calculates the minimum of this function in the interval $[0,90]$ using \texttt{fminbnd}, It then prints the minimum and maximum values of the function at this minimum. Finally, the minimum value of this function is maximized in Octave, thus determining the maximum value of the pipe length to pass through the corridor, represented as $L$.
\ref{Figura Cinema}.
\begin{figure}[H]
    \centering
    \includegraphics[width=0.8\linewidth]{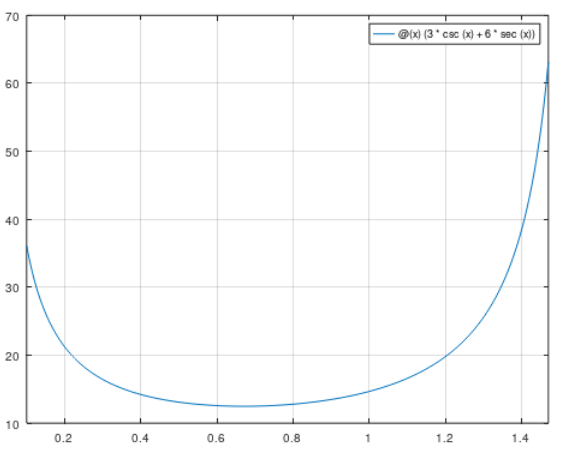}
    \caption{Graph of the equation. Source: \cite{gilat2004matlab}}
    \label{Grafico Tubulação}
\end{figure}

Analyzing the function's graph, based on the values we took for the width of the L-shaped corridor, 3 feet and 6 feet respectively, when running the code, we arrived at the value of approximately $64.403$. In this case, using the method where the maximum of the function would be calculated as the minimum with opposite sign, as explained earlier, $\max = -\min$, we were able to calculate the maximum length that the metal beam can reach to pass through the width of the corridor.

\subsection{Cinema Problem Solution}
To solve this problem using Octave, we can employ trigonometric calculations to derive the expression for the viewing angle $\theta$ in terms of the distance $x$. Subsequently, we aim to maximize this expression to determine the distance $x$ that yields the maximum$ \theta$.

According to Figure \ref{Figura Cinema}, the configuration is a right triangle where the distance $x$ is the hypotenuse, and the other two sides are the height $h$ (distance between the screen and the eyes) and the base $d$ (width of the screen).

Trigonometry tells us that:

\[
\cos(\theta) = \frac{x^2 - \frac{5}{2}^2}{\left(x^2 + \frac{4}{12}^2\right) + 36} - \frac{2}{x^2 + \frac{5}{2}^2 + x^2 + \frac{4}{12}^2}
\]
That is:
\[
\theta = \arccos \left ( \dfrac{h}{d} \right)
\]
Therefore, we can write $h$ in terms of $x$ and $d$ using the Pythagorean theorem:

\[
h = \sqrt{x^2 - \left(\frac{d}{2}\right)^2}
\]

Substituting $h$ in the tangent equation, we obtain:

\[
\theta(x) = \arccos \left(\frac{\sqrt{x^2 - \left(\frac{d}{2}\right)^2}}{d} \right)
\]

Assigning the values given in the problem, where by the Pythagorean theorem the opposite leg, in this case denoted as $d$ with a value of 40 meters, and additionally assigning a value to $x$ to calculate the value of $\theta$ where the value would be 10 meters, upon running the code, we obtain a result of 1.57, which corresponds to the angle of inclination. Furthermore, the angle at which the individual would be seated in the cinema can be calculated by applying the corresponding values to where the person is fixed in the cinema, thus determining the cinema's inclination angle with respect to $\theta$.

 \section{Conclusion}

Learning about optimization in issues involving maxima and minima of functions is incredibly valuable, not only for the development of mathematical skills but also for acquiring analytical and practical skills that are essential in various areas of knowledge. Octave is an extremely effective tool that not only facilitates teaching but also enhances the connection with the content, simplifying the calculation of maxima and minima of a function. This work represents a way to simplify the understanding of the topic of maxima and minima, highlighting the importance of optimization and the use of Octave

%Bibliography
\bibliographystyle{unsrt}  
\bibliography{references}

\end{document}